\theoremstyle{plain}
  \newtheorem{theorem}{Theorem}[section]
  \newtheorem{prop}[theorem]{Proposition}
  \newtheorem{lemma}[theorem]{Lemma}
  \newtheorem{assumption}[theorem]{Assumption}
  \newtheorem{definition}[theorem]{Definition}
  \newtheorem{dflem}[theorem]{Definition-Lemma}
  \newtheorem{remark}[theorem]{Remark} 
  \newtheorem{example}[theorem]{Example}
\newcommand{\cl}{\operatorname{cl}}
\newcommand{\Real}{\operatorname{Re}}
\newcommand{\A}{\mathcal{A}}
\newcommand{\F}{\mathcal{F}}
\newcommand{\ch}{\mathsf{ch}}
\newcommand{\M}{\mathsf{M}}
\newcommand{\Sep}{\operatorname{Sep}}
\title{Semi-algebraic partition and basis 
of Borel-Moore homology of hyperplane arrangements}
\author{Ko-Ki Ito, Masahiko Yoshinaga}
\date{\today}
\begin{document} 

\maketitle
\begin{abstract}
We describe an explicit semi-algebraic partition 
for the complement of a real hyperplane arrangement such 
that each piece is contractible and so that the pieces form a basis 
of Borel-Moore homology. 
We also give an explicit correspondence between the 
de Rham cohomology and the Borel-Moore homology. 
\end{abstract}

\section{Introduction}
\label{sec:intro}

Semi-algebraic partitions of algebraic 
sets have been studied in various fields of 
mathematics from geometry to computational 
algebra. General theory says that 
any algebraic set has a semi-algebraic triangulation. 
However, not only triangulations, but also 
other types of ``efficient'' decompositions are 
sometimes useful. 
The following are motivating 
examples. 
\begin{example}
$\Bbb{CP}^n=\Bbb{C}^n\sqcup\Bbb{C}^{n-1}\sqcup\dots
\sqcup\Bbb{C}^1\sqcup\{pt\}$. 
\end{example}

\begin{example}
%\normalfont
\label{ex:mot}
\label{ex:c*}
Consider $\Bbb{C}^*=\Bbb{C}\setminus\{0\}$. 
Put 
\begin{eqnarray*}
S_0&=&\{re^{i\theta}\mid r>0, 0<\theta<2\pi\}\\
S_1&=&\{z\mid z\in\Bbb{R}, z>0\}. 
\end{eqnarray*}
Then $\Bbb{C}^*=S_0\sqcup S_1$.
\end{example}

Both of these decompositions 
reflect (co)homological 
structures of the manifolds naturally. More precisely, 
they are 
\begin{itemize}
\item[(i)] disjoint unions of contractible semi-algebraic subsets, and 
\item[(ii)] the closures of the pieces form a basis of 
Borel-Moore (locally finite) homology 
$H^{BM}_*(X, \Bbb{Z})$ (or ordinary cohomology 
$H^*(X, \Bbb{Z})$ via Poincar\'e duality.) 
\end{itemize}

The purpose of this paper is to generalize 
Example \ref{ex:mot} to hyperplane 
arrangements defined over the real number field $\Bbb{R}$. 

There is another reason to expect the existence of
such partitions. The complement of a complex hyperplane
arrangement is known to be a minimal space
\cite{dim-pap, fal-hom, ran-mor}, that is
a space homotopy equivalent to a finite CW complex with
exactly as many
$k$-cells as the $k$-th Betti number, for all $k$. 
If the arrangement is defined over $\Bbb{R}$,
then the real structures (e.g., chambers) are
related to the topology of the complexified
complement (\cite{sal-top, zas-face}).
With the help of real structures, the minimal CW
complex has been explicitly described in
\cite{sal-sett} and \cite{Y}. It is natural to
expect that the dual stratification to
a minimal CW complex induces a partition
satisfying (i) and (ii) above.

\section{Preliminary}
\label{sec:prel}

Let 
$\A = \{ H_1 , \ldots , H_n \}$ 
be an affine hyperplane arrangement 
in the real vector space $\Bbb{R} ^\ell$. 
Let us fix a defining affine linear form $\alpha_i$ 
in such a way that $H_i=\{\alpha_i=0\}$. 
Let $L(\A)$ 
be the set of nonempty intersections of elements of $\A$. 
We denote by $L^p(\A)$ the set of all $p$-dimensional 
affine subspaces $X\in L(\A)$. 

For an affine subspace $X\subset\Bbb{R}^\ell$, let us denote 
$X_\Bbb{C}=X\otimes_\Bbb{R}\Bbb{C}$ the complexification. 
Denote by 
$\ch (\A) $
the set of all chambers and by 
$\M ( \A) 
= 
\Bbb{C} ^\ell \setminus 
\bigcup _{H\in\A} H_\Bbb{C}
$
the complement to the complexified hyperplanes.

Let $\F$ be a generic flag 
 in $\Bbb{R} ^\ell$ 
$$
\F : 
\emptyset = \F ^{-1} \subset 
\F ^0 
\subset 
\F ^1 
\subset 
\cdots 
\subset 
\F ^\ell = \Bbb{R} ^\ell ,  
$$  
where  
each $\F ^q$ 
is a {\it generic} $q$-dimensional affine subspace, 
that is, 
$\dim \F ^q \cap X = q+ \dim X-\ell$ 
for $X \in L (\A)$. 
Let 
$\{ h_1 , \ldots h_\ell \}$ 
be a system of defining equations of $\F$, 
that is, 
$$
\F ^{q} 
= \{ h_{q+1} = \cdots = h_\ell =0 \} ,  \mbox{ for }
q=0, 1, \dots, \ell-1, 
$$  
where each $h_i$ is an affine linear form on $\Bbb{R} ^\ell$. 
Define 
$$ 
\ch ^q _\F ( \A ) 
= \{ 
C \in \ch (\A) \mid 
C \cap \F ^q \not= \emptyset \quad \mbox{and} \quad 
C \cap \F ^{q-1} = \emptyset 
\} . 
$$ 
We assume that $\F$ satisfies the following : 
\begin{assumption} 
\label{assum:h}
For $q=0, \ldots ,\ell$, 
$\F ^{q} _{>0}$ 
%(resp. $\F ^{l-q} _<$)
 denotes  
$$ 
\{ h_{q+1}= h_{q+2} = \cdots = h_\ell =0, h_{q} >0 
%\quad (resp. \ h_{q+1} <0 ) 
\} . 
%\quad (h_0 :=0 , h_{\ell+1} :=1)    
$$  
\begin{enumerate} 
\item For an arbitrary chamber $C$, 
if belonging to  
$\ch ^{q} _{\F} (\A)$, then 
$C \cap \F ^{q} \subset \F ^{q} _{>0}$.     
\item For any two 
$X$, $X^\prime \in L(\A)$ with 
$\dim X=\dim X^\prime = \ell- q$ (i.e. 
satisfying 
$X \cap \F^{q} = \{ 1pt \}$ 
and 
$X ^\prime \cap \F^{q} = \{ 1pt \}$),  
if $X \not= X^\prime$, then 
$$ 
h_{q} (X \cap \F^{q}) 
\not= 
h_{q} (X ^\prime \cap \F^{q})  . 
$$ 
\end{enumerate} 
\end{assumption} 
Note that such a flag always exists. Indeed, we first 
choose a generic hyperplane $\F^{\ell-1}$ in such a way 
that $\F^{\ell-1}$ does not separate $0$-dimensional 
intersections $L^0(\A)$. 
In a similar fashion, we choose 
$\F^{\ell-2}\subset\F^{\ell-1}$ inductively. 

Let us denote $\M^q:=(\F^q_\Bbb{C})\cap\M(\A)=
\F_\Bbb{C}^q\setminus\bigcup_{H\in\A}H_\Bbb{C}$. 
We have the following propositions (see \cite{orl-ter, Y2}). 
\begin{prop}
\label{prop:trunc}
Let $\A$ be an arrangement and $W^q$ a 
$q$-dimensional generic subspace. 
Let $\A\cap W^q$ be the arrangement on $W^q$ induced by $\A$. 
\begin{itemize}
\item[$(1)$] 
Then $L(\A\cap W^q)$ is 
isomorphic to $L^{\geq \ell-q}(\A)=\{X\in L(\A)\mid 
\dim X\geq\ell-q\}$ as posets. 
\item[$(2)$] 
Then the natural inclusion 
$i:\M(\A)\cap W^q\hookrightarrow \M(\A)$ 
induces isomorphisms 
$$
i_k: H_k(\M(\A)\cap W^q, \Bbb{Z})\stackrel{\cong}{\longrightarrow}
H_k(\M(\A), \Bbb{Z}), 
$$
for $k=0, 1, \ldots, q$. 
\end{itemize}
\end{prop}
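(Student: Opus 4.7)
The plan is to address (1) and (2) separately, treating (1) as a purely combinatorial consequence of genericity and (2) via an iterated Lefschetz argument combined with Orlik--Solomon combinatorics.

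For (1), I would write down the candidate map
\[
\phi : L^{\geq \ell-q}(\A) \longrightarrow L(\A \cap W^q), \qquad X \longmapsto X \cap W^q.
\]
The genericity of $W^q$ gives $\dim(X \cap W^q) = \dim X + q - \ell$ for every $X\in L^{\geq \ell-q}(\A)$, so each $X\cap W^q$ is nonempty and lies in $L(\A \cap W^q)$. Surjectivity follows because a flat of $\A\cap W^q$ has the form $\bigl(\bigcap_{i\in I}H_i\bigr)\cap W^q$ for some $I$. For injectivity and order-reflection, if $X\cap W^q \subseteq X'\cap W^q$ with $X,X'\in L^{\geq \ell-q}(\A)$, the equality of generic dimensions forces $X\subseteq X'$. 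Order-preservation is immediate, so $\phi$ is a poset isomorphism.

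For (2), I would reduce the inclusion to a sequence of generic hyperplane sections. Extend $W^q$ to a generic flag $W^q \subset W^{q+1}\subset \cdots \subset W^\ell=\Bbb{R}^\ell$ with each $W^i$ generic of dimension $i$, using the same construction as for the flag $\F$. Each intermediate complement $\M(\A)\cap W^i_\Bbb{C}$ is a smooth complex affine variety of dimension $i$, and $\M(\A)\cap W^i_\Bbb{C}$ is a generic affine hyperplane section of $\M(\A)\cap W^{i+1}_\Bbb{C}$. The affine Lefschetz hyperplane theorem (Hamm--L\^e) then yields that
\[
H_k(\M(\A)\cap W^i_\Bbb{C};\Bbb{Z}) \longrightarrow H_k(\M(\A)\cap W^{i+1}_\Bbb{C};\Bbb{Z})
\]
is an isomorphism for $k\leq i-1$ and a surjection for $k=i$. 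Composing from $i=q$ up to $i=\ell$, the map $i_k$ of the proposition is an isomorphism for $0\leq k\leq q-1$ and a surjection for $k=q$.

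The step I expect to be the main obstacle is promoting this surjection at $k=q$ to an isomorphism, since affine Lefschetz alone is too weak at the top degree. Here I would appeal to the Brieskorn--Orlik--Solomon description of $H^*(\M(\A);\Bbb{Z})$: the rank of $H_q(\M(\A))$ is a combinatorial invariant depending only on the truncated lattice $L^{\geq \ell-q}(\A)$, being expressible as a sum of absolute values of M\"obius numbers over codimension-$q$ flats. By part (1), $L^{\geq \ell-q}(\A)\cong L(\A\cap W^q)$, so $H_q(\M(\A)\cap W^q_\Bbb{C})$ and $H_q(\M(\A))$ are free abelian groups of the same finite rank. A surjection between such groups is automatically an isomorphism, completing the argument.
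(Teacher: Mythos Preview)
Your argument is sound. The paper does not actually prove this proposition; it records it as a known fact with a reference to Orlik--Terao and to \cite{Y2}, so there is no in-paper proof to compare against. Your route---part (1) by the standard dimension count for generic sections, and part (2) by iterated affine Lefschetz (Hamm--L\^e) followed by a rank comparison via Orlik--Solomon combinatorics and the poset isomorphism of part (1)---is exactly the argument one finds in the cited literature, and all steps go through as you describe. One small wording issue in (1): for order-reflection, saying ``equality of generic dimensions forces $X\subseteq X'$'' is a bit elliptical; the clean way is to note that $X\cap X'\in L(\A)$ is nonempty (since $X\cap W^q\subseteq X'\cap W^q$ is nonempty), apply the generic dimension formula to $X\cap X'$ to get $\dim(X\cap X')=\dim X$, and conclude $X\cap X'=X$, hence $X\subseteq X'$. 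With that clarification the proof is complete.
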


\begin{prop}
\label{prop:betti}
Let $\A$ be a real arrangement and $\F$ a generic flag. 
%Define 
%$$
%\ch_\F^q(\A)=\{C\in\ch(\A)|\ C\cap\F^q\neq\emptyset 
%\mbox{ {\normalfont and} }C\cap\F^{q-1}=\emptyset \}, 
%$$
%for each $q=0, 1, \ldots, \ell$. 
Then $|\ch_\F^q(\A)|=b_q(\M(\A)) $. 
\end{prop}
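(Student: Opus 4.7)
The plan is to establish the formula by induction on $q$ (equivalently, a telescoping of partial sums), using Zaslavsky's counting theorem for the induced arrangement $\A\cap\F^q$ together with the truncation isomorphism of Proposition \ref{prop:trunc}.

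First I would prove the combinatorial identity
\[
\sum_{k=0}^{q} |\ch_\F^k(\A)| = |\ch(\A\cap\F^q)|.
\]
To see this, observe that the nonempty intersections $C\cap\F^q$, for $C\in\ch(\A)$, are exactly the chambers of the induced arrangement $\A\cap\F^q$: each $C\cap\F^q$ is convex hence connected, and conversely every chamber of $\A\cap\F^q$ is an open connected subset of $\F^q$ disjoint from $\bigcup_{H\in\A} H$, so it is contained in a unique $C\in\ch(\A)$. This produces a bijection between $\ch(\A\cap\F^q)$ and $\{C\in\ch(\A) : C\cap\F^q\neq\emptyset\}$. By the definition of $\ch_\F^k(\A)$ (which classifies each such $C$ by the smallest $k$ with $C\cap\F^k\neq\emptyset$), the latter set decomposes as the disjoint union $\bigsqcup_{k=0}^{q}\ch_\F^k(\A)$, giving the claimed identity.

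Next I would apply Zaslavsky's theorem to the $q$-dimensional real affine arrangement $\A\cap\F^q$, which states
\[
|\ch(\A\cap\F^q)| = \sum_{k=0}^{q} b_k(\M(\A\cap\F^q)),
\]
and then invoke Proposition \ref{prop:trunc}(2) to replace $b_k(\M(\A\cap\F^q))$ by $b_k(\M(\A))$ for each $k\le q$. Combining with the first identity yields
\[
\sum_{k=0}^{q} |\ch_\F^k(\A)| = \sum_{k=0}^{q} b_k(\M(\A)).
\]
Subtracting the analogous formula at index $q-1$ produces $|\ch_\F^q(\A)| = b_q(\M(\A))$. The base case $q=0$ is direct: $\F^0$ is a generic point lying in a unique chamber of $\A$, so both sides equal $1=b_0(\M(\A))$.

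The main point to verify carefully is the convexity-based bijection between $\ch(\A\cap\F^q)$ and the chambers of $\A$ that meet $\F^q$; once this is in place, everything else is an assembly of standard ingredients. Note that Assumption \ref{assum:h} is not invoked in this counting argument—only the generic position of $\F$ (which makes $\A\cap\F^q$ a genuine arrangement on $\F^q$ with the lattice described in Proposition \ref{prop:trunc}(1)) and the truncation result are needed.
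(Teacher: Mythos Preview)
Your argument is correct. Note, however, that the paper does not supply its own proof of this proposition: it is stated as a known result with a reference to \cite{orl-ter, Y2}. Your proof is exactly the standard one found in those sources---the telescoping combination of the bijection $\ch(\A\cap\F^q)\leftrightarrow\{C\in\ch(\A):C\cap\F^q\neq\emptyset\}$, Zaslavsky's chamber count, and the generic-section isomorphism of Proposition~\ref{prop:trunc}(2)---so there is nothing to contrast.
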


Let $X, Y\subset\Bbb{C}^\ell$ be subsets. 
We denote by $\overline{Y}$ the closure 
of $Y$ in $\Bbb{C}^\ell$ (with respect to the 
classical topology) and $\cl_X(Y)=X\cap\overline{Y}$. 

For given $C\in\ch_\F^q(\A)$, 
$\cl_{\F^q}(C)=\overline{C}\cap\F^q\subset\Bbb{R}^\ell$ 
is a $q$-dimensional polytope 
which does not intersect $\F^{q-1}$. 
By Assumption \ref{assum:h}, the vertices 
of $\cl_{\F^q}(C)$ have mutually different and 
positive heights 
with respect to $h_q$ (noting that 
$\F^{q-1}=\F^q\cap\{h_q=0\}$). 
There is a unique vertex $p\in\cl_{\F^q}(C)$ at which 
$h_q|_{\cl_{\F^q}(C)}$ attains the minimum. 
Then by Proposition \ref{prop:trunc} (1), 
there exists unique intersection $X_C\in L(\A)$ 
satisfying $\{p\}=X_C\cap\F^q$. (Note that 
in case $C\in\ch_\F^0(\A)$, we consider $X_C=\Bbb{R}^\ell$.) 

\begin{definition}
Let $X\subset\Bbb{R}^\ell$ be an affine subspace. 
Denote by $\tau(X)$ the linear subspace through the 
origin which is parallel to $X$ and $\dim \tau(X)=\dim X$. 
And define 
$$
\A_{[X]}:=
\{H\in\A\mid
\tau(H)\supset\tau(X)\} 
$$
the set of hyperplanes parallel to $X$. Note that 
$\A_X:=\{H\in\A\mid H\supset X\}\subseteq
\A_{[X]}$. 
\end{definition}

\begin{definition}
Let $C\in\ch_\F^q(\A)$. 
We denote by $\widetilde{C}\in\ch(\A_{[X_C]})$ 
the unique chamber with $C\subset
\widetilde{C}$. 
\end{definition}
Using these notations, 
we shall define a partial ordering 
$\preceq$ in $\ch^q_\F(\A)$. 
\begin{definition}
\label{def:order}
For $C, C'\in\ch^q_\F(\A)$, we denote 
$C\preceq C'$ if and only if $C'\subset\widetilde{C}$. 
\end{definition}
The following is easy. 

\begin{lemma}
\label{lem:order}
If $C\preceq C'$, then $h_q(X_C\cap\F^q)\leq 
h_q(X_{C'}\cap\F^q)$. 
\end{lemma}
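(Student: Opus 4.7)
Write $p := X_C \cap \F^q$ and $p' := X_{C'} \cap \F^q$, so the goal is $h_q(p) \leq h_q(p')$. Because $p'$ realizes the minimum of $h_q$ on $\cl_{\F^q}(C')$ and $C' \subset \widetilde{C}$ by the hypothesis $C \preceq C'$, it suffices to establish the half-space containment
\[
\widetilde{C} \cap \F^q \subset \{x \in \F^q \mid h_q(x) \geq h_q(p)\}.
\]
The plan is to enlarge $\widetilde{C}$ to a convex cone with apex $p$ and then exploit the minimum property of $p$ to control $h_q$ on that cone.

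The cone I would use is $\widetilde{C}_{\mathrm{loc}}$, the chamber of the sub-arrangement $\A_{X_C} \subseteq \A_{[X_C]}$ containing $C$. Since refining an arrangement only subdivides chambers, $\widetilde{C} \subseteq \widetilde{C}_{\mathrm{loc}}$. Every hyperplane of $\A_{X_C}$ contains $X_C$ and in particular passes through $p$; writing its defining form as $L_i(x - p)$ with $L_i$ linear exhibits $\widetilde{C}_{\mathrm{loc}}$ as a convex cone with apex $p$ in $\Bbb{R}^\ell$, so that $\widetilde{C}_{\mathrm{loc}} \cap \F^q$ is in turn a convex cone with apex $p$ inside $\F^q$.

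To finish, I would identify the set of hyperplanes of $\A$ passing through $p$ with $\A_{X_C}$: by Proposition \ref{prop:trunc}(1) the vertex $p$ is the $0$-dimensional flat of $\A \cap \F^q$ corresponding to the $(\ell - q)$-dimensional flat $X_C$, and a dimension count on $\bigcap \{H \in \A \mid p \in H\}$ forces this intersection to equal $X_C$, so every such $H$ lies in $\A_{X_C}$. Consequently $\cl_{\F^q}(C)$ coincides with $\widetilde{C}_{\mathrm{loc}} \cap \F^q$ in a neighborhood of $p$, so every ray of the cone $\widetilde{C}_{\mathrm{loc}} \cap \F^q$ emanating from $p$ begins inside $\cl_{\F^q}(C)$. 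Since $p$ minimizes the affine linear function $h_q$ on $\cl_{\F^q}(C)$, the derivative $dh_q$ is nonnegative along every such ray, and by affine linearity this inequality propagates to the whole cone, giving the required half-space containment.

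The step requiring the most care is the local identification $\cl_{\F^q}(C) = \widetilde{C}_{\mathrm{loc}} \cap \F^q$ near $p$, which is where the genericity of $\F$ and Proposition \ref{prop:trunc}(1) really do the work; the rest is elementary convex geometry together with the chamber inclusion $\widetilde{C} \subseteq \widetilde{C}_{\mathrm{loc}}$.
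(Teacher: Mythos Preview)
Your argument is correct. The paper does not actually prove this lemma---it merely remarks beforehand that ``the following is easy'' and gives no argument---so there is no detailed proof to compare against. The half-space containment you establish, namely that $h_q$ on $\widetilde{C}\cap\F^q$ is bounded below by $h_q(X_C\cap\F^q)$, is exactly the assertion the authors later invoke without justification in the proof of Lemma~\ref{lem:key2}(1); your passage through the larger cone $\widetilde{C}_{\mathrm{loc}}$ (the chamber of $\A_{X_C}$ containing $C$) together with the local identification at $p$ via genericity of $\F^q$ is a clean way to supply this detail, and the reduction to a cone is what makes the affine-linear extrapolation of the minimum rigorous.
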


\section{Minimal partition}

In this section, we shall introduce 
the semialgebraic partition. 

\begin{definition}
Let $p_1, p_2\in\Bbb{R}^\ell$. 
The set $\Sep(p_1, p_2)$ of separating hyperplanes 
is defined by 
$$
\Sep(p_1, p_2):=\{
H \in \A \mid [p_1, p_2] \cap H \neq \emptyset\}, 
$$
where $[p_1, p_2]$ is the closed line segment 
connecting two points $p_1$ and $p_2$. 

Similarly, we also denote by $\Sep(C_1, C_2)$ 
the set of separating 
hyperplanes of two chambers $C_1, C_2$. 
\end{definition}

\begin{lemma}
\label{lem:key1}
Let $C, C'\in\ch^q_\F(\A)$. If $X_C=X_{C'}$, then 
$\Sep(C, C')\subset\A_{X_C}$. 
\end{lemma}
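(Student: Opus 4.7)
The plan is to show that any separating hyperplane $H$ must contain the common minimal vertex $p = X_C \cap \F^q = X_{C'} \cap \F^q$, and then use the genericity of $\F$ to upgrade this to $H \supset X_C$.

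The first step is to record the key geometric fact that $p$ lies in the closure of both chambers. By construction, $p$ is the vertex of the polytope $\cl_{\F^q}(C) = \overline{C}\cap \F^q$ at which $h_q$ attains its minimum, so $p \in \overline{C}$. Since $X_{C'} = X_C$ by hypothesis, the same point $p$ is also the corresponding vertex of $\cl_{\F^q}(C')$, hence $p \in \overline{C'}$ as well.

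The second step uses the defining linear form $\alpha_H$ of $H \in \Sep(C,C')$. Because $H$ separates $C$ and $C'$, we may assume (after changing sign) $\alpha_H > 0$ on $C$ and $\alpha_H < 0$ on $C'$. By continuity, $\alpha_H(p) \geq 0$ from $p\in\overline{C}$ and $\alpha_H(p) \leq 0$ from $p\in\overline{C'}$, forcing $\alpha_H(p)=0$, i.e., $p\in H$.

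The third step — which I expect to be the main subtlety, though it is short — is to pass from $p \in H$ to $H\supset X_C$, so that indeed $H \in \A_{X_C}$. Suppose for contradiction that $H$ does not contain $X_C$. Then $X := H \cap X_C$ is a nonempty element of $L(\A)$ (it contains $p$) of dimension $\dim X_C - 1 = \ell - q - 1$. Genericity of $\F$ gives $\dim(X\cap \F^q) = (\ell-q-1)+q-\ell = -1$, so $X\cap \F^q = \emptyset$, contradicting $p \in X \cap \F^q$. Hence $H \supset X_C$, and the lemma follows. The only nontrivial ingredient beyond elementary convex-geometry reasoning is this genericity/dimension count, which converts a point-level containment into a full affine containment.
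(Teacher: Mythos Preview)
Your proof is correct and follows essentially the same route as the paper: both show the common vertex $p=X_C\cap\F^q$ lies on any separating $H$ by a sign argument on the defining linear form, and then upgrade $p\in H$ to $X_C\subset H$ via genericity. The only cosmetic difference is that the paper cites Proposition~\ref{prop:trunc}(1) (the poset isomorphism $L(\A\cap\F^q)\cong L^{\geq\ell-q}(\A)$) for this last step, whereas you unpack it as a direct dimension count on $H\cap X_C\in L(\A)$; these are the same argument.
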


\begin{proof}
Let $H\in\Sep(C, C')$ and choose a defining 
equation $f$, i.e., $H=\{f=0\}$. 
Since $H$ separates $C$ and $C'$, we may assume 
$C\subset \{f\geq 0\}$ and $C'\subset\{f\leq 0\}$. 
Hence 
$X_C\cap\F^q\subset \cl_{\F^q}(C)\subset\{f\geq 0\}$. 
Similarly, 
$X_{C'}\cap\F^q\subset \cl_{\F^q}(C')\subset\{f\leq 0\}$. 
We have 
$X_C\cap\F^q\subset\{f\geq 0\}\cap\{f\leq 0\}\cap\F^q
=H\cap\F^q$. Then the inclusion $X_C\subset H$ 
follows from Proposition \ref{prop:trunc} (1). 
This means that $H\in\A_{X_C}$. 
\end{proof}

\begin{lemma}
\label{lem:key2}
Let $C\in\ch_\F^{q}(\A)$ and 
$C'\in\ch_\F^{q'}(\A)$. 
%Let us denote by 
%$\widetilde{C}\in\ch(\A_{X_C})$ the chamber 
%of $\A_{X_C}$ such that $C\subset\widetilde{C}$. 
\begin{itemize}
\item[(1)] If $C'\subset\widetilde{C}$, then 
either $C\preceq C'$ (with $q=q'$), or $q<q'$. 
\item[(2)] If $q=q'$ and 
$h_q(X_C\cap\F^q)< h_q(X_{C'}\cap\F^q)$, then 
$\A_{X_{C'}}\cap\Sep(C, C')\neq\emptyset$. 
\item[(3)] 
If 
%either $q=q'$ and $C'\not\preceq C$, or 
$q<q'$, 
then $\A_{X_{C'}}\cap\Sep(C, C')\neq\emptyset$. 
\end{itemize}
\end{lemma}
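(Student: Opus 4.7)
The plan is to prove (2) and (3) from a common geometric observation and then deduce (1) from (3). Set $p' := X_{C'} \cap \F^{q'}$ and let $\widehat{C'}$ denote the chamber of $\A_{X_{C'}}$ that contains $C'$. The key claim is: for every $x \in \widehat{C'} \cap \F^{q'}$ one has $h_{q'}(x) > h_{q'}(p')$. Indeed, every $H \in \A_{X_{C'}}$ contains $X_{C'}$, so the arrangement induced on $\F^{q'}$ by $\A_{X_{C'}}$ is central at $p'$, and $\widehat{C'} \cap \F^{q'}$ is an open cone with apex $p'$. By Proposition \ref{prop:trunc}(1), the hyperplanes of $\A$ passing through $p'$ are exactly those in $\A_{X_{C'}}$, so near $p'$ the polytope $\cl_{\F^{q'}}(C')$ and the closed cone $\overline{\widehat{C'} \cap \F^{q'}}$ coincide. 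Assumption \ref{assum:h}(2) then forces $p'$ to be the unique minimizer of $h_{q'}$ on $\cl_{\F^{q'}}(C')$, so $h_{q'}$ strictly increases along every outgoing ray from $p'$ into the cone; affine linearity of $h_{q'}$ propagates this strict inequality to the entire cone.

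Part (2) follows by contradiction. Suppose $\A_{X_{C'}} \cap \Sep(C, C') = \emptyset$; then $C$ and $C'$ lie in the same chamber of $\A_{X_{C'}}$, so $C \subset \widehat{C'}$. The key claim applied to $x \in C \cap \F^q$ (using $q = q'$) gives $h_q(x) > h_q(p')$, and passing to the limit at $p = X_C \cap \F^q \in \overline{C \cap \F^q}$ yields $h_q(p) \geq h_q(p')$, contradicting the hypothesis.

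Part (3) runs analogously. Again assume $\A_{X_{C'}} \cap \Sep(C, C') = \emptyset$ so that $C \subset \widehat{C'}$. Assumption \ref{assum:h}(1) gives $h_{q'}(p') > 0$, so the key claim yields $h_{q'} > 0$ throughout $C \cap \F^{q'}$. But $q < q'$ implies $\F^q \subset \F^{q'-1} = \F^{q'} \cap \{h_{q'} = 0\}$, and $C \cap \F^q \neq \emptyset$ exhibits points of $C \cap \F^{q'}$ at which $h_{q'} = 0$, a contradiction.

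For (1), when $q = q'$ the conclusion $C \preceq C'$ is just the definition of $C' \subset \widetilde{C}$. When $q \neq q'$ it suffices to rule out $q > q'$; this is immediate from (3) applied with the roles of $C$ and $C'$ interchanged, which would yield a hyperplane $H \in \A_{X_C}$ separating $C$ and $C'$. But $\A_{X_C} \subset \A_{[X_C]}$ and both $C, C' \subset \widetilde{C}$ lie in a common chamber of $\A_{[X_C]}$, so no such $H$ can exist. I expect the trickiest step to be globalizing the strict minimum at $p'$ from a neighborhood to the entire (possibly unbounded) cone $\widehat{C'} \cap \F^{q'}$, but the affine linearity of $h_{q'}$ combined with the cone structure from the centrality of $\A_{X_{C'}}$ handles this cleanly.
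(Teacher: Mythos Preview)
Your argument is correct and follows essentially the same route as the paper. Both proofs hinge on the observation that the chamber $D=\widehat{C'}$ of $\A_{X_{C'}}$ containing $C'$ has $h_{q'}$ bounded below by $h_{q'}(X_{C'}\cap\F^{q'})$ on $D\cap\F^{q'}$, whence $D\cap\F^{q'-1}=\emptyset$; the paper simply asserts this minimum while you supply the cone argument, and you derive (1) from (3) whereas the paper proves (1) directly by the same mechanism applied to $\widetilde{C}$ instead of $\widehat{C'}$.
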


\begin{proof}
(1) 
%Since $\F^{q-1}\subset\F^q$, 
First note that 
$\widetilde{C}\cap\F^{q-1}=
(\widetilde{C}\cap\F^q)\cap\F^{q-1}$. 
%=C\cap\F^{q-1}=\emptyset$. 
Since $h_q|_{\widetilde{C}\cap\F^q}$ attains 
the minimum at $X_C\cap\F^q$, 
$h_q|_{\widetilde{C}\cap\F^q}>0$. 
Hence $C'\cap\F^{q-1}=\emptyset$ and we have $q'\geq q$. 
The assertions thus follow from Definition 
\ref{def:order}. 

(2) 
Suppose that 
$\A_{X_{C'}}\cap\Sep(C, C')=\emptyset$. 
Then $C$ and $C'$ are contained in the same 
chamber $D\in\ch(\A_{X_{C'}})$ 
of $\A_{X_{C'}}$. Since $h_q|_{\cl_{\F^q}(D)}$ attains 
the minimum at $X_{C'}\cap\F^q$, we have 
$h_q(X_C\cap\F^q)\geq h_q(X_{C'}\cap\F^q)$. This 
contradicts the assumption. 

(3) 
Suppose that 
$\A_{X_{C'}}\cap\Sep(C, C')=\emptyset$. 
Then $C$ and $C'$ are contained in the same 
chamber $D\in\ch(\A_{X_{C'}})$ 
of $\A_{X_{C'}}$. Obviously 
$D\cap\F^{q'-1}=\emptyset$ and 
hence $C\cap\F^{q}=\emptyset$. This contradicts 
to $C\in\ch_\F^q(\A)$. 
\end{proof}

From now on we fix a base point $p_C\in C\cap\F^q$ 
for each $C\in\ch_\F^q(\A)$. 
It is easily seen that the constructions below do not 
depend on the choice of $p_C$. 

We can identify $\Bbb{C}^\ell$ with the tangent 
bundle 
$T\Bbb{R}^\ell\cong\Bbb{R}^\ell\times \Bbb{R}^\ell$ by 
%\begin{eqnarray*}
$$
\begin{array}{ccl}
\Bbb{R}^\ell\times \Bbb{R}^\ell&\longrightarrow&\Bbb{C}^\ell\\
%&&\\
(x, v)&\longmapsto&x+\sqrt{-1}v. 
\end{array}
$$
%\end{eqnarray*}
We also denote $x$ by $\Real(x+\sqrt{-1}v)$. 

Now we introduce the main object of this paper. 
\begin{definition}
For a chamber $C\in\ch(\A)$, we define 
$$
S(C)=\left\{
x+\sqrt{-1}v\in\Bbb{C}^\ell \left|
\begin{array}{l}
v\in\tau(X_C), x\in \Bbb{R}^\ell \mbox{ and}\\
v\notin \tau(H), \mbox{ for } H\in\Sep(p_C, x)
\end{array}
\right.
\right\}. 
$$
\end{definition}
If $C\in\ch_\F^q(\A)$, $S(C)$ is an open 
subset of $\Bbb{R}^\ell\times\tau(X_C)$, hence 
real $(2\ell-q)$-dimensional 
manifold. 
\begin{example}
%\normalfont
\label{ex:1dim}
Let $H=\{0\}\subset\Bbb{R}$ and the arrangement 
$\A=\{H\}$. Fix a generic flag 
$\F^{0}=\{-1\}$. There are two chambers 
$C_0=\Bbb{R}_{<0}$ and $C_1=\Bbb{R}_{>0}$. 
Then $\ch_\F^0(\A)=\{C_0\}$ and 
$\ch_\F^1(\A)=\{C_1\}$. 
Then $S(C_0)=S_0$ and $S(C_1)=S_1$ as 
Example \ref{ex:mot}. 
\end{example}

\begin{example}
\label{ex:3lines}
%\normalfont
Let $\A=\{H_1, H_2, H_3\}$ be an arrangement of lines 
on $\Bbb{R}^2$ and 
fix a generic flag $\F^\bullet$ 
as in Figure \ref{fig:ex}. 
Then 
$\ch_\F^0(\A)=\{C_0\}, 
\ch_\F^1(\A)=\{C_1, C_2, C_3\}, 
\ch_\F^2(\A)=\{C_4, C_5\}$. 
By definition, we also have 
$X_{C_0}=\Bbb{R}^2$, 
$X_{C_1}=H_1, X_{C_2}=H_2, X_{C_3}=H_3$, 
and 
$X_{C_4}=H_1\cap H_3$, 
$X_{C_5}=H_1\cap H_2$, 
$\A_{[X_{C_0}]}=\emptyset$, 
$\A_{[X_{C_1}]}=\{H_1\}$, 
$\A_{[X_{C_2}]}=\A_{[X_{C_3}]}=\{H_2, H_3\}$, 
$\A_{[X_{C_4}]}=\A_{[X_{C_5}]}=\A$, 
$\widetilde{C_0}=\Bbb{R}^2$, 
$\widetilde{C_1}=C_1\cup C_2\cup C_3$, 
$\widetilde{C_2}=C_2\cup C_5$, 
$\widetilde{C_3}=C_3\cup C_4$, 
$\widetilde{C_4}=C_4$, 
$\widetilde{C_5}=C_5$. 
Using these data, we can describe 
$S(C)$. For example, 
$S(C_4)=C_4$, $S(C_5)=C_5$. 
Other pieces are shown in 
Figure \ref{fig:ex}. 
(In the figure, a dotted line indicates 
the direction to which $v$ can not be directed.)

\end{example}

\begin{figure}[htbp]
\begin{picture}(100,430)(0,0)
%\begin{picture}(100,430)(150,0)
\thicklines

\put(273,303){$H_3$}
\put(233,303){$H_2$}
\put(105,303){$H_1$}

\put(120,310){\line(1,1){120}}
\multiput(230,310)(40,0){2}{\line(-1,1){120}}

\put(50,400){$\F^2=\Bbb{R}^2$}

\multiput(50,330)(5,0){63}{\circle*{2}}
\put(60,335){$\F^0$}
\put(75,330){\circle*{4}}

\put(335,335){$\F^1$}

\put(215,385){$C_3$}
\put(192.5,362.5){$C_2$}
\put(170,340){$C_1$}
\put(150,360){$C_0$}
\put(172.5,382.5){$C_5$}
\put(195,405){$C_4$}

\put(280,410){$\A=\{H_1, H_2, H_3\}$}
\put(280,392){$\ch_\F^0(\A)=\{C_0\}$}
\put(280,375){$\ch_\F^1(\A)=\{C_1, C_2, C_3\}$}
\put(280,358){$\ch_\F^2(\A)=\{C_4, C_5\}$}

%%%%% so far the arrangements

%%%%% below strata

%1
\put(20,0){\line(1,1){120}}
\multiput(130,0)(40,0){2}{\line(-1,1){120}}
\put(150,40){$S(C_1)$}

\put(70,10){\circle*{4}}
\put(71.41,11.41){\vector(1,1){10}}
\put(68.59,8.59){\vector(-1,-1){10}}
\put(105,45){\circle{4}}
\put(106.41,46.41){\vector(1,1){10}}
\put(103.59,43.59){\vector(-1,-1){10}}
\put(140,80){\circle{4}}
\put(141.41,81.41){\vector(1,1){10}}
\put(138.59,78.59){\vector(-1,-1){10}}

%3
\put(220,0){\line(1,1){120}}
\multiput(330,0)(40,0){2}{\line(-1,1){120}}
\put(350,40){$S(C_3)$}

\put(340,80){\circle*{4}}
\put(338.59,81.41){\vector(-1,1){10}}
\put(341.41,78.59){\vector(1,-1){10}}
\put(300,120){\circle{4}}
\put(298.59,121.41){\vector(-1,1){10}}
\put(301.41,118.59){\vector(1,-1){10}}

%\put(230,50){\circle*{4}}
%\put(231.41,51.41){\vector(1,1){10}}
%\put(228.59,48.59){\vector(-1,-1){10}}
%\put(265,85){\circle{4}}
%\put(266.41,86.41){\vector(1,1){10}}
%\put(263.59,83.59){\vector(-1,-1){10}}
%\put(300,120){\circle{4}}
%\put(301.41,121.41){\vector(1,1){10}}
%\put(298.59,118.59){\vector(-1,-1){10}}

%0
\put(20,130){\line(1,1){120}}
\multiput(130,130)(40,0){2}{\line(-1,1){120}}
\put(150,170){$S(C_0)$}

\put(140,210){\circle{4}}
\put(140,212){\vector(0,1){10}}
\put(140,208){\vector(0,-1){10}}
\put(138,210){\vector(-1,0){10}}
\put(142,210){\vector(1,0){10}}
%\put(141.41,211.41){\vector(1,1){10}}
%\put(138.59,208.59){\vector(-1,-1){10}}
%\put(138.59,211.41){\vector(-1,1){10}}
%\put(141.41,208.59){\vector(1,-1){10}}
\multiput(143,213)(2.2,2.2){5}{\circle*{2}}
\multiput(137,207)(-2.2,-2.2){5}{\circle*{2}}
\multiput(137,213)(-2.2,2.2){5}{\circle*{2}}
\multiput(142,208)(2.2,-2.2){5}{\circle*{2}}

\put(105,175){\circle{4}}
\put(105,177){\vector(0,1){10}}
\put(105,173){\vector(0,-1){10}}
\put(103,175){\vector(-1,0){10}}
\put(107,175){\vector(1,0){10}}
%\put(106.41,176.41){\vector(1,1){10}}
%\put(103.59,173.59){\vector(-1,-1){10}}
\multiput(108,178)(2.2,2.2){5}{\circle*{2}}
\multiput(102,172)(-2.2,-2.2){5}{\circle*{2}}
%\put(103.59,176.41){\vector(-1,1){10}}
%\put(106.41,173.59){\vector(1,-1){10}}
\multiput(102,178)(-2.2,2.2){5}{\circle*{2}}
\multiput(107,173)(2.2,-2.2){5}{\circle*{2}}

\put(70,140){\circle{4}}
\put(70,142){\vector(0,1){10}}
\put(70,138){\vector(0,-1){10}}
\put(68,140){\vector(-1,0){10}}
\put(72,140){\vector(1,0){10}}
%\put(71.41,141.41){\vector(1,1){10}}
%\put(68.59,138.59){\vector(-1,-1){10}}
\put(68.59,141.41){\vector(-1,1){10}}
\put(71.41,138.59){\vector(1,-1){10}}
\multiput(73,143)(2.2,2.2){5}{\circle*{2}}
\multiput(67,137)(-2.2,-2.2){5}{\circle*{2}}
%\multiput(67,143)(-2.2,2.2){5}{\circle*{2}}
%\multiput(73,137)(2.2,-2.2){5}{\circle*{2}}

\put(30,180){\circle*{4}}
\put(30,182){\vector(0,1){10}}
\put(30,178){\vector(0,-1){10}}
\put(28,180){\vector(-1,0){10}}
\put(32,180){\vector(1,0){10}}
\put(31.41,181.41){\vector(1,1){10}}
\put(28.59,178.59){\vector(-1,-1){10}}
%\multiput(33,183)(2.2,2.2){5}{\circle*{2}}
%\multiput(27,177)(-2.2,-2.2){5}{\circle*{2}}
\put(28.59,181.41){\vector(-1,1){10}}
\put(31.41,178.59){\vector(1,-1){10}}
%\multiput(27,183)(-2.2,2.2){5}{\circle*{2}}
%\multiput(33,177)(2.2,-2.2){5}{\circle*{2}}

\put(65,215){\circle{4}}
\put(65,217){\vector(0,1){10}}
\put(65,213){\vector(0,-1){10}}
\put(63,215){\vector(-1,0){10}}
\put(67,215){\vector(1,0){10}}
\put(66.41,216.41){\vector(1,1){10}}
\put(63.59,213.59){\vector(-1,-1){10}}
%\multiput(68,218)(2.2,2.2){5}{\circle*{2}}
%\multiput(62,212)(-2.2,-2.2){5}{\circle*{2}}
%\put(63.59,216.41){\vector(-1,1){10}}
%\put(66.41,213.59){\vector(1,-1){10}}
\multiput(62,218)(-2.2,2.2){5}{\circle*{2}}
\multiput(68,212)(2.2,-2.2){5}{\circle*{2}}

\put(100,250){\circle{4}}
\put(100,252){\vector(0,1){10}}
\put(100,248){\vector(0,-1){10}}
\put(98,250){\vector(-1,0){10}}
\put(102,250){\vector(1,0){10}}
\put(101.41,251.41){\vector(1,1){10}}
\put(98.59,248.59){\vector(-1,-1){10}}
%\multiput(103,253)(2.2,2.2){5}{\circle*{2}}
%\multiput(97,247)(-2.2,-2.2){5}{\circle*{2}}
%\put(98.59,251.41){\vector(-1,1){10}}
%\put(101.41,248.59){\vector(1,-1){10}}
\multiput(97,253)(-2.2,2.2){5}{\circle*{2}}
\multiput(103,247)(2.2,-2.2){5}{\circle*{2}}

%2
\put(220,130){\line(1,1){120}}
\multiput(330,130)(40,0){2}{\line(-1,1){120}}
\put(350,170){$S(C_2)$}

\put(305,175){\circle*{4}}
%\put(305,177){\vector(0,1){10}}
%\put(305,173){\vector(0,-1){10}}
%\put(303,175){\vector(-1,0){10}}
%\put(307,175){\vector(1,0){10}}
%\put(306.41,176.41){\vector(1,1){10}}
%\put(303.59,173.59){\vector(-1,-1){10}}
\put(303.59,176.41){\vector(-1,1){10}}
\put(306.41,173.59){\vector(1,-1){10}}
%\multiput(302,178)(-2.2,2.2){5}{\circle*{2}}
%\multiput(307,173)(2.2,-2.2){5}{\circle*{2}}

\put(265,215){\circle{4}}
%\put(265,217){\vector(0,1){10}}
%\put(265,213){\vector(0,-1){10}}
%\put(263,215){\vector(-1,0){10}}
%\put(267,215){\vector(1,0){10}}
%\put(66.41,216.41){\vector(1,1){10}}
%\put(63.59,213.59){\vector(-1,-1){10}}
%\multiput(268,218)(2.2,2.2){5}{\circle*{2}}
%\multiput(262,212)(-2.2,-2.2){5}{\circle*{2}}
\put(263.59,216.41){\vector(-1,1){10}}
\put(266.41,213.59){\vector(1,-1){10}}
%\multiput(262,218)(-2.2,2.2){5}{\circle*{2}}
%\multiput(268,212)(2.2,-2.2){5}{\circle*{2}}

%\multiput(-10,0)(106.67,0){4}{\line(1,1){100}}
%\multiput(60,0)(106.67,0){4}{\line(-1,1){60}}
%\multiput(90,0)(106.67,0){4}{\line(-1,1){90}}

\end{picture}
\caption{Example \ref{ex:3lines}}
\label{fig:ex}
\end{figure}

\begin{remark}
The above example shows that 
our decomposition is not always a Whitney 
stratification. Indeed, 
$\dim S(C_1)=3$ and 
$\cl_{\M(\A)}(S(C_1))\setminus S(C_1)=C_2\cup C_3$. 
However the subset $C_2\cup C_3$ is not a union of 
our $2$-dimensional components $S(C_4)$ and $S(C_5)$. 
%a union of $2$-dimensional components. 
\end{remark}

\begin{lemma}
\label{lem:real}
The real part $\Real S(C)= 
\{\Real(z)\in\Bbb{R}^\ell\mid z\in S(C)\}$ 
of $S(C)$ 
coincides with $\widetilde{C}$. 
\end{lemma}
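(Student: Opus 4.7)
The plan is to prove the two inclusions $\widetilde{C}\subseteq \Real S(C)$ and $\Real S(C)\subseteq \widetilde{C}$ separately, using in both directions the characterization that $\widetilde{C}$ is the unique chamber of $\A_{[X_C]}$ containing $p_C$, i.e.\ a point $x\in\Bbb{R}^\ell$ lies in $\widetilde{C}$ if and only if no hyperplane in $\A_{[X_C]}$ separates $p_C$ and $x$.

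For the inclusion $\widetilde{C}\subseteq \Real S(C)$, I would take $x\in\widetilde{C}$ and construct a vector $v\in\tau(X_C)$ witnessing $x\in\Real S(C)$. Since $p_C,x$ lie in the same chamber of $\A_{[X_C]}$, we have $\Sep(p_C,x)\cap\A_{[X_C]}=\emptyset$. Hence for each $H\in\Sep(p_C,x)$, $\tau(H)$ does not contain $\tau(X_C)$, so $\tau(H)\cap\tau(X_C)$ is a \emph{proper} linear subspace of $\tau(X_C)$. Since $\Sep(p_C,x)$ is finite and a finite union of proper subspaces of a real vector space cannot exhaust it, we can choose
\[
v\in\tau(X_C)\setminus\bigcup_{H\in\Sep(p_C,x)}\tau(H),
\]
and then $x+\sqrt{-1}v\in S(C)$ by definition, so $x\in\Real S(C)$.

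For the reverse inclusion $\Real S(C)\subseteq\widetilde{C}$, take $x\in\Real S(C)$ with accompanying $v\in\tau(X_C)$ as in the definition of $S(C)$. Suppose for contradiction that some $H\in\A_{[X_C]}$ separates $p_C$ and $x$. Then $H\in\Sep(p_C,x)$, so by the defining condition of $S(C)$ we have $v\notin\tau(H)$. On the other hand, $H\in\A_{[X_C]}$ means $\tau(H)\supset\tau(X_C)$, so $v\in\tau(X_C)\subset\tau(H)$, a contradiction. Therefore no hyperplane of $\A_{[X_C]}$ separates $p_C$ and $x$, and hence $x\in\widetilde{C}$.

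I do not expect any serious obstacle: the only mild subtlety is the standard fact that a finite union of proper linear subspaces of an $\Bbb{R}$-vector space is not the whole space, needed in the first inclusion. The rest is a direct unwinding of the definitions of $S(C)$, $\A_{[X_C]}$, and $\widetilde{C}$.
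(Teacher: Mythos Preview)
Your proof is correct and follows essentially the same argument as the paper's: both directions proceed by the same contrapositive/direct reasoning using that $\widetilde{C}$ is the chamber of $\A_{[X_C]}$ containing $p_C$. The only cosmetic difference is that the paper chooses $v\in\tau(X_C)\setminus\bigcup_{H\in\A\setminus\A_{[X_C]}}\tau(H)$ (a single $v$ working for every $x\in\widetilde{C}$), whereas you choose $v$ avoiding only $\tau(H)$ for $H\in\Sep(p_C,x)$; either choice suffices.
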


\begin{proof}
Assume $x\in\Real S(C)$. Then there exists 
$v\in\Bbb{R}^\ell$ such that $x+\sqrt{-1}v\in S(C)$. 
Let $H\in\A_{[X_C]}$. By definition, 
$v\in \tau(X_C)\subset\tau(H)$, and 
$H\notin\Sep(p_C, x)$. Hence $p_C$ and $x$ is not 
separated by any hyperplane $H$ belonging to $\A_{[X_C]}$, we have 
$\Real S(C)\subset\widetilde{C}$. 

Conversely, assume $x\in\widetilde{C}$. 
Since $x$ and $p_C$ are contained in the same 
chamber of $\A_{[X_C]}$, 
we have $\Sep(p_C, x)\cap\A_{[X_C]}
=\emptyset$. Choose $v\in\tau(X_C)
\setminus \bigcup_{H\in\A\setminus\A_{[X_C]}} \tau(H)$. 
Then $x+\sqrt{-1}v\in S(C)$. 
\end{proof}

\begin{lemma}
If $C\in\ch^q_\F(\A)$, then 
$S(C)$ is a contractible 
$(2\ell-q)$-dimensional manifold. 
\end{lemma}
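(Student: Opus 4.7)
The plan is to build a straightforward contraction of $S(C)$ onto the single point $p_C$ by a linear homotopy that simultaneously shrinks the real part toward $p_C$ and the imaginary part toward $0$. The manifold structure and the dimension count are already recorded in the paragraph preceding the lemma, so the substantive task is contractibility.

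First I would check that $p_C$ itself belongs to $S(C)$: taking $x = p_C$ and $v = 0$ gives $v \in \tau(X_C)$ trivially, and $\Sep(p_C, p_C) = \emptyset$ because $p_C$ is an interior point of the chamber $C$, so the remaining condition on $v$ is vacuous.

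Next I would introduce the straight-line homotopy
\[
F : S(C) \times [0, 1] \longrightarrow \Bbb{C}^\ell, \qquad F(x + \sqrt{-1}v,\, t) = \bigl((1-t) x + t p_C\bigr) + \sqrt{-1}\,(1-t) v.
\]
Plainly $F(\cdot, 0) = \mathrm{id}$ and $F(\cdot, 1) \equiv p_C$, so the only thing left to verify is that the image lies in $S(C)$ for every $t \in [0, 1]$. The imaginary component $(1-t) v$ stays in the linear subspace $\tau(X_C)$. For the separating-hyperplane condition, the key observation is that the segment $[p_C, (1-t) x + t p_C]$ is a subsegment of $[p_C, x]$, so
\[
\Sep(p_C,\, (1-t) x + t p_C) \subseteq \Sep(p_C, x).
\]
Thus every $H$ relevant at time $t$ was already relevant at time $0$, giving $v \notin \tau(H)$ by hypothesis; since $\tau(H)$ is a linear subspace and $1-t \neq 0$ for $t < 1$, this immediately yields $(1-t) v \notin \tau(H)$. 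At the endpoint $t = 1$ one has $\Sep(p_C, p_C) = \emptyset$ and nothing needs to be checked.

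There is no serious obstacle here: the only delicate point is ensuring that the straight-line homotopy stays admissible throughout, and this reduces to the subsegment containment for $\Sep$ together with the linearity of the directions $\tau(H)$.
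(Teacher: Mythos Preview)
Your proof is correct and follows essentially the same route as the paper: both establish that $S(C)$ is star-shaped with center $p_C$ via the straight-line homotopy $(1-t)p_C + t(x+\sqrt{-1}v)$ (your $F$ is this path with the parameter reversed). If anything, your version is slightly more careful, since you verify that each intermediate point actually lies in $S(C)$, whereas the paper's write-up only checks membership in $\M(\A)$; the missing step there is exactly the subsegment containment $\Sep(p_C,(1-t)p_C+tx)\subseteq\Sep(p_C,x)$ that you spell out.
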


\begin{proof}
Let us prove that $S(C)$ is star-shaped. 
For a point $x+\sqrt{-1}v\in S(C)$ and consider 
the path $p(t)$ with parameter $0\leq t\leq 1$, 
$$
p(t)=(1-t)p_C+t(x+\sqrt{-1}v)=((1-t)p_C+tx)+\sqrt{-1}tv. 
$$
We have $p(1)=x+\sqrt{-1}v$ and $p(0)=p_C$. 
It suffices to prove that 
$p(t)\in\M(\A)$ for $0\leq t\leq 1$. 
If $\Real p(t)=(1-t)p_C+tx\notin H$, 
then obviously we have 
$p(t)\notin H_\Bbb{C}$. 
Suppose $(1-t)p_C+tx\in H$ for some $t$ with $0< t\leq 1$. 
%then $x$ is either on the other side as 
%$C$ with respect to $H$ ($0<t<1$)
%or on $H$ (if $t=1$). 
Then by assumption, $H\in\Sep(p_C, x)$. 
By the definition of $S(C)$, 
$v$ is transversal to $H$, so is $tv$, 
which means $p(t)\in\M(\A)$. 
Hence $S(C)$ is star-shaped. 
\end{proof}

Now we have the following : 

\begin{theorem} 
\label{thm:main}
The complement 
$\M(\A)$ of $\A$ is a disjoint union of 
$S(C)$, $C\in\ch(\A)$. Namely, 
$$ 
\M ( \A ) 
= \bigsqcup _{C \in \ch (\A)} S(C). 
$$ 
\end{theorem}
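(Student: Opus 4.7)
The plan is to establish two containments: (i) pairwise disjointness $S(C) \cap S(C') = \emptyset$ for $C \neq C'$, and (ii) covering $\M(\A) \subseteq \bigcup_C S(C)$. The disjointness part will lean on Lemmas \ref{lem:key1} and \ref{lem:key2}, while the covering part requires an explicit construction of $C$ from a given $z = x + \sqrt{-1}v \in \M(\A)$ using the flag $\F$.

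For disjointness, I would argue by contradiction. Suppose $z = x + \sqrt{-1}v \in S(C) \cap S(C')$ with $C \neq C'$, and write $C \in \ch_\F^q$, $C' \in \ch_\F^{q'}$ with $q \leq q'$ (WLOG). I distinguish three cases: if $q < q'$, Lemma \ref{lem:key2}(3) produces $H \in \A_{X_{C'}} \cap \Sep(C, C')$; if $q = q'$ with $X_C \neq X_{C'}$, Assumption \ref{assum:h}(2) lets me assume $h_q(X_C \cap \F^q) < h_q(X_{C'} \cap \F^q)$ and Lemma \ref{lem:key2}(2) provides such an $H$; if $q = q'$ and $X_C = X_{C'}$, Lemma \ref{lem:key1} gives $\Sep(C, C') \subseteq \A_{X_C}$, which is nonempty since $C \neq C'$. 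In every case $H \supseteq X_{C'}$ (or the common flat), so $\tau(H) \supseteq \tau(X_{C'}) \ni v$, i.e., $v \in \tau(H)$. The defining condition of $S(C)$ then forces $H \notin \Sep(p_C, x)$, so $p_C$ and $x$ lie on the same side of $H$; symmetrically for $p_{C'}$. Hence $p_C$ and $p_{C'}$ are on the same side of $H$, contradicting $H \in \Sep(C, C') \subseteq \Sep(p_C, p_{C'})$.

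For covering, fix $z = x + \sqrt{-1}v \in \M(\A)$. Unpacking the definition, $z \in \M(\A)$ is equivalent to $x \notin H$ for every $H$ in the sub-arrangement $\A[v] := \{H \in \A : v \in \tau(H)\}$; so $x$ lies in a unique chamber $D$ of $\A[v]$. It suffices to produce $C \in \ch(\A)$ with $p_C \in D$ and $v \in \tau(X_C)$, because $p_C, x \in D$ then gives $\Sep(p_C, x) \cap \A[v] = \emptyset$, which together with $v \in \tau(X_C)$ places $z$ in $S(C)$. To construct such $C$, I use the flag: set $q^* := \min\{q \geq 0 : D \cap \F^q \neq \emptyset\}$. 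Chambers of $\A$ contained in $D$ and meeting $\F^{q^*}$ lie in $\ch_\F^{q^*}(\A)$ (since $C \cap \F^{q^*-1} \subseteq D \cap \F^{q^*-1} = \emptyset$), so by Assumption \ref{assum:h}(1), $h_{q^*} > 0$ on $D \cap \F^{q^*}$. Consequently, on the closed polyhedron $\overline{D} \cap \F^{q^*}$ in $\F^{q^*}$, the linear function $h_{q^*}$ is bounded below and attains its minimum at a vertex $p^*$, unique by Assumption \ref{assum:h}(2). Since $\overline{D} \cap \F^{q^*}$ is cut out by hyperplanes of $\A[v]$, this vertex has the form $p^* = X^* \cap \F^{q^*}$ with $X^* \in L^{\ell - q^*}(\A[v])$ (via Proposition \ref{prop:trunc}(1) applied to $\A[v]$); in particular $v \in \tau(X^*)$. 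The desired $C$ is the unique chamber of $\A$ in $D$ with $X_C = X^*$: existence and uniqueness follow from the identification of chambers of $\A$ near $X^*$ with those of $\A_{X^*}$ and, separately, of $\A[v]$ near $X^*$ with chambers of $(\A[v])_{X^*} = \A_{X^*}$ (since $\A_{X^*} \subseteq \A[v]$), together with the flag-direction condition $X_C = X^*$.

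The hard part will be justifying the covering construction, specifically that the minimizing vertex $p^*$ of $\overline{D} \cap \F^{q^*}$ lies at a flat $X^* \in L(\A[v])$ rather than merely in $L(\A)$, so that $v \in \tau(X^*)$ follows automatically. This rests on the polyhedral structure of $\overline{D} \cap \F^{q^*}$, whose facets are traces of $\A[v]$-hyperplanes and whose vertices therefore correspond to codimension-$q^*$ flats of $\A[v]$. Uniqueness of $C$ in $D$ is already guaranteed by the disjointness argument above, so together these steps complete the partition.
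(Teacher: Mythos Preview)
Your proposal is correct and follows essentially the same route as the paper: disjointness via Lemmas~\ref{lem:key1} and~\ref{lem:key2}, and covering by locating $x$ in a chamber $D$ of $\A_{[v]}$, taking the minimal $q$ with $D\cap\F^q\neq\emptyset$, and reading off $X_C$ from the lowest vertex of $\cl_{\F^q}(D)$. The one noteworthy difference is that your disjointness argument is slightly more uniform: in the case $X_C=X_{C'}$ you run the same ``$v\in\tau(H)\Rightarrow H\notin\Sep(p_C,x)$ and $H\notin\Sep(p_{C'},x)$, hence $p_C,p_{C'}$ lie on the same side of $H$'' contradiction as in the other cases, whereas the paper instead invokes Lemma~\ref{lem:real} to separate $\Real S(C)$ from $\Real S(C')$; both are valid, and your version avoids the extra lemma.
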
 

\begin{proof}
First we prove that $S(C)\cap S(C')=\emptyset$ when 
$C\neq C'$. Suppose this is not the case. Then 
there exists a point 
$x+\sqrt{-1}v\in S(C)\cap S(C')$. 
\begin{itemize}
\item[(a)] 
If both $C, C'$ are 
in $\ch^{q}_{\F}(\A)$ and 
$X_C\neq X_{C'}$, then 
we may assume $
h_{q} (X_C \cap \F^{q}) 
<
h_{q} (X_{C^\prime} \cap \F^{q})$. 
From Lemma \ref{lem:order} we have $C'\not\preceq C$. 
By Lemma \ref{lem:key2} (2), there exists 
$H\in\A_{X_{C'}}\cap\Sep(C, C')$. 
By definition of $S(C')$, $x+\sqrt{-1}v\in S(C')$ 
implies that  
\begin{equation}
\label{eq:sep}
\A_{X_{C'}}\cap\Sep(x, p_{C'})=\emptyset, 
\end{equation}
and 
\begin{equation}
\label{eq:v}
v\in \tau(X_{C'}). 
\end{equation}
It follows from (\ref{eq:sep}) that $x$ and 
$p_C$ are separated by $H$, and from 
(\ref{eq:v}) that $v\in\tau(H)$. 
(Note that $\tau(X_{C'})\subset\tau(H)$.) 
Then we have $x+\sqrt{-1}v\notin S(C)$, which 
contradicts the assumption, this concludes 
$S(C)\cap S(C')=\emptyset$. 

\item[(b)] 
Next we consider the case $C$ and $C'$ are 
in $\ch^{q}_{\F}(\A)$ and $X_C = X_{C'}$. 
By Lemma \ref{lem:key1}, $C$ and $C'$ are 
separated by a hyperplane $H\in\A_{X_{C}}$. 
This implies that $H$ separates 
$\widetilde{C}$ and $\widetilde{C'}$. 
By Lemma \ref{lem:real}, we have 
$\Real S(C)\cap \Real S(C')=\emptyset$. 

\item[(c)] 
Finally, we consider the case 
$C\in\ch^{q}_{\F}(\A)$ and 
$C'\in\ch^{q'}_{\F}(\A)$, 
with $q<q'$. Then again by Lemma \ref{lem:key2} (3), 
there exists a hyperplane $H\in\A_{X_{C'}}$ 
separating $C$ and $C'$. 
As in the case (a), we obtain $x+\sqrt{-1}v\notin S(C)$. 
Therefore $S(C)\cap S(C')=\emptyset$. 
\end{itemize}

Next we prove that 
$$
\M(\A)=
\bigcup_{C\in\ch(\A)} S(C). 
$$
Let $x+\sqrt{-1}v\in\M(\A)$. 
Recall that $\A_{[v]}$ is the set of all hyperplanes 
parallel to $v$, namely, 
$\A_{[v]}=\{H\in\A\mid \tau(H)\ni v\}$. 
%(Denote $q=\dim\bigcap_{H\in\A_{[v]}}\tau(H)$.) 
Since $v$ is parallel to hyperplanes in 
$\A_{[v]}$, $x$ is not contained in 
$H\in\A_{[v]}$. We can choose 
a chamber $D\in\ch(\A_{[v]})$ 
such that $x\in D$. 
Let $q=\min\{i\mid D\cap\F^i\neq\emptyset\}$. 
%need the following? 
%Recall that $\ch(\A)^{\subseteq D}$ is 
%the set of chambers $C\subset\ch(\A)$ which are contained 
%in $\widetilde{C}$, and note that these chambers 
%do not intersect with $\F^{q-1}$. 
%%%Let $I\subset\ch(\A)$ 
%%%the subset of all chambers which are contained in $\widetilde{C}$. 
Since the closure 
$\cl_{\F^q}(D)$ is a convex 
polytope in $\F^q$ which does not intersect with 
$\F^{q-1}$, there exists the unique 
point $p\in \cl_{\F^q}(D)$ of 
the minimum with respect to 
$h_{q}$. We can choose $X\in L(\A)$ such that 
$p=X\cap\F^q$. 
Note that 
$X=\bigcap_{H\in\A_p} H$ and then 
$v\in\tau(X)$. 
There exists $C\in\ch^q_\F(\A)$ 
satisfying $X_C=X$ and $C\subset D$. 
We prove that 
$$
x+\sqrt{-1}v\in S(C).
$$

%As mentioned above, we have $v\in\tau(X_C)$. So 
%it 
It is enough to prove $v\notin\tau(H)$ for 
$H\in\Sep(x, p_C)$. 
Note that $x$ and $p_C$ are contained in the 
same chamber $D\in\ch(\A_{[v]})$. 
Hence if $H\in\Sep(x, p_C)$, then 
$H\notin\A_{[v]}$. By definition of 
$\A_{[v]}$, $v\notin \tau(H)$. 
Therefore $v$ is transverse to $H$, which means 
that $x+\sqrt{-1}v\in S(C)$. 
\end{proof}

\section{Basis of BM-homology}

In this section, we shall prove that the 
closures $\{\cl_{\M(\A)}(S(C))\}_{C\in\ch_\F^q(\A)}$ 
form a basis of $H_{2\ell-q}^{BM}(\M(\A), \Bbb{Z})$. 
In \S\ref{subsec:ori}, we determine orientations 
on our spaces. In \S\ref{subsec:min}, we recall 
the constructions of a basis $\{[\sigma_C]\mid 
C\in\ch_\F^q(\A)\}$ 
of $H_q(\M(\A), \Bbb{Z})$ 
from \cite{Y}. 
By computing intersection numbers of 
$\cl_\M(S(C))$ and 
$[\sigma_{C'}]$, in \S\ref{subsec:min}, we 
prove the main result.

\subsection{Orientations}
\label{subsec:ori}

In this section, we shall define orientations 
for $\F^q$, $X_C$ and $S(C)$ by 
choosing ordered basis of the tangent spaces. 
(See chapter 3 of \cite{gui-pol} for generalities 
of orientations and intersections of manifolds.) 

Recall that the subspace $\F^q$ is defined by 
$\{x\in\Bbb{R}^\ell\mid h_{q+1}(x)=\dots=h_\ell(x)=0\}$, 
where $h_i (i=1, \dots, \ell)$ are linear forms. 
Hence $(h_1, \dots, h_q)$ forms a coordinate 
of the space $\F^q$. We consider 
the orientation defined by the ordered basis 
$(\partial_{h_1}, \dots, \partial_{h_q})$ 
of $T_x\F^q=\tau(\F^q)$. 
In particular, the orientation of 
$\Bbb{R}^\ell$ is determined by 
the ordered basis 
$(\partial_{h_1}, \dots, \partial_{h_\ell})$. 
If $C$ belongs to $\ch^q_\F(\A)$, then $X_C$ is an 
affine subspace complemental to $\F^q$. 
So $(h_{q+1}, \dots, h_\ell)$ forms a coordinate 
of $X_C$, and we consider the 
orientation determined by the dual basis 
$(\partial_{h_{q+1}}, \dots, 
\partial_{h_\ell})$ with an order. Note that 
the intersection number 
$\F^q\cdot X_C=(-1)^{q(\ell-q)}\cdot X_C\cdot \F^q$ equals to $+1$. 

Next we consider the orientation of $S(C)$. 
By definition, the tangent space of $S(C)$ at $p_C$ 
is expressed as 
$$
T_{p_C}S(C)\simeq T_{p_C}C\oplus\sqrt{-1}\cdot\tau(X_C). 
$$
Thus we define the orientation by 
$(\partial_{h_1}, 
\partial_{h_2}, 
\dots, 
\partial_{h_\ell}, 
\sqrt{-1}\partial_{h_{q+1}}, 
\dots, 
\sqrt{-1}\partial_{h_\ell})$. 
The case $q=0$ defines an orientation on $\Bbb{C}^\ell$ 
by 
$(\partial_{h_1}, 
\dots, 
\partial_{h_\ell}, 
\sqrt{-1}\partial_{h_{1}}, 
\dots, 
\sqrt{-1}\partial_{h_\ell})$. 
We should note that this orientation 
is different from the usual 
one defined by 
$(\partial_{h_1}, 
\sqrt{-1}\partial_{h_{1}}, 
\partial_{h_2}, 
\sqrt{-1}\partial_{h_{2}}, 
\dots, 
\partial_{h_\ell}, 
\sqrt{-1}\partial_{h_\ell})$.

The rest will be used in \S\ref{sec:os}. 
Let $I=\{i_1, \ldots, i_q\}\subset\{1, \ldots, n\}$ be 
an ordered subset of $q$ indices, 
$\A(I):=\{H_{i_1}, \ldots, H_{i_q}\}$ be 
a subarrangement consisting of $q$ hyperplanes. 
Assume $H_{i_1}, \ldots, H_{i_q}$ are independent, that is, 
$d\alpha_{i_1}\wedge\cdots\wedge d\alpha_{i_\ell}\neq 0$ 
or equivalently, the intersection 
$X(I):=H_{i_1}\cap\cdots\cap H_{i_q}$ is a non-empty 
subspace of codimension $q$. 

\begin{dflem}
The set of chambers $\ch(\A(I))$ consists of 
$2^q$ chambers. There is a unique chamber, denoted by 
$C_0(I)\in\ch(\A(I))$, which satisfies 
$C_0(I)\cap\F^{q-1}=\emptyset$. 
\end{dflem}
\begin{proof}
The Poincar\'e polynomial 
of $\Bbb{C}^\ell\setminus\bigcup_{i\in I}H_{i,\Bbb{C}}$ 
is $(1+t)^q$. In particular, $b_q=1$. 
Hence by Proposition \ref{prop:betti}, 
$|\ch_{\F}^q(\A(I))|=1$. 
\end{proof}

Choose a normal vector $w_{i_k}\perp H_{i_k}$ for 
each $H_{i_k}$ such that $C_0(I)$ is contained in 
the half space $H_{i_k}+\Bbb{R}_{>0}\cdot w_{i_k}$. 
Suppose $H_{i_1}, \dots, H_{i_q}$ are independent (i.e., 
the intersection $X(I)=H_{i_1}\cap\dots\cap H_{i_q}$ 
has codimension $q$ with $q\leq\ell$). 
Since $\F^q$ is generic, $\F^q\cap X(I)$ is $0$-dimensional. 
Thus by the identification 
$\Bbb{R}^\ell/X(I)\simeq\F^q$,  
the normal vectors $w_{i_1},\dots, w_{i_q}$ induce 
a basis of $\F^q$. 

\begin{definition}
For an ordered $q$-tuple 
$I=\{i_1, \ldots, i_q\}\subset\{1, \ldots, n\}$, define 
$\varepsilon(I)$ by 
$$
\varepsilon(I)=
\left\{
\begin{array}{ll}
0& \mbox{if $H_{i_1}, \ldots, H_{i_q}$ are dependent,} \\
1& \mbox{if $(w_{i_1}, \ldots, w_{i_q})$ induces a 
positive basis of $\F^q$,} \\
-1&\mbox{if $(w_{i_1}, \ldots, w_{i_q})$ induces a 
negative basis of $\F^q$.}  
\end{array}
\right.
$$
\end{definition}

\subsection{Minimal CW-decomposition}
\label{subsec:min}

Here we recall results from \cite[\S 5.2]{Y}. 
For each $C\in\ch_\F^q(\A)$, 
there exists a continuous map, unique up to homotopy, 
%up to homotopy a unique 
%continuous map, 
$$
\sigma_C:
(D^q, \partial D^q)
\longrightarrow 
(\M^q, \M^{q-1}), 
$$
from the $q$-dimensional disk to the 
complement $\M^q=\M(\A)\cap\F^q_\Bbb{C}$ 
such that 
%From the construction, $\sigma_C$ satisfies 
%the following: 
\begin{quote}
(Transversality) 
$\sigma_C(0)=p_C\in C\cap\F^q$, and 
$\sigma_C(D^q)$ intersects $C\cap\F^q$ 
transversally in $\F^q_\Bbb{C}$ at the point; 
$\sigma_C(D^q)\pitchfork C=\{p_C\}$, and \\
(Non-intersecting) 
$\sigma_C(D^q)\cap C'=\emptyset$ for 
$C'\in\ch_\F^q(\A)\backslash\{C\}$. 
\end{quote}
These properties guarantee the following homotopy 
equivalence (\cite[4.3.1]{Y}): 
\begin{equation}
\M^q\simeq\M^{q-1}\cup_{(\partial\sigma_C)}
\left(
\bigsqcup_{C\in\ch_\F^q(\A)}D^q
\right), 
\end{equation}
where the right hand side is obtained by 
attaching $q$-dimensional disks to $\M^{q-1}$ 
along 
$\partial\sigma_C:\partial D^q\rightarrow \M^{q-1}$ 
for ${C\in\ch_\F^q(\A)}$. 

Recall that $T_{p_C}\M^q\simeq 
\tau(\F^q)\oplus\sqrt{-1}\cdot
\tau(\F^q)$. We introduce an orientation 
on $\sigma_C$ by identifying 
$T_{p_C}\sigma_C(D^q)$ with 
$\sqrt{-1}\cdot\tau(\F^q)$, 
equivalently, by an ordered basis 
$(\sqrt{-1}\partial_{h_1}, \dots, \sqrt{-1}\partial_{h_q})$. 
%we have fixed an orientation 
%on $C\cap\F^q$ by an ordered basis 
%$(\partial_{h_1}, \dots, \partial_{h_q})$ 
%of the tangent space. 

\begin{prop}[\cite{Y}]
\label{prop:trubasis}
(1) $[\sigma_C]\in H_q(\M^q, \M^{q-1}; \Bbb{Z}), (C\in
\ch^q_\F(\A))$ forms a basis. 

(2) $H_q(\M^q, \Bbb{Z})\simeq 
H_q(\M^q, \M^{q-1}; \Bbb{Z})\simeq 
H_q(\M(\A), \Bbb{Z})$. 
\end{prop}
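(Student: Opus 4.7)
The plan is to use the homotopy equivalence
$$\M^q\simeq\M^{q-1}\cup_{(\partial\sigma_C)}\Bigl(\bigsqcup_{C\in\ch_\F^q(\A)}D^q\Bigr)$$
together with an induction on $q$, where Propositions \ref{prop:trunc} and \ref{prop:betti} supply the quantitative input needed to identify Betti numbers with cell counts and thereby force all cellular boundary maps in the relevant degrees to vanish.

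First I would establish part (1) directly from the attaching description. By excision and the cofiber sequence coming from the pushout above, $H_q(\M^q,\M^{q-1};\Bbb Z)$ is canonically isomorphic to $\bigoplus_{C\in\ch_\F^q(\A)}H_q(D^q,\partial D^q;\Bbb Z)$, and the class $[\sigma_C]$ corresponds to the generator of the $C$-th summand. Hence $\{[\sigma_C]\mid C\in\ch_\F^q(\A)\}$ is a free basis of $H_q(\M^q,\M^{q-1};\Bbb Z)$ with rank $|\ch_\F^q(\A)|$, which equals $b_q(\M(\A))$ by Proposition \ref{prop:betti}.

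For part (2) I would argue by induction on $q$ that $\M^q$ is homotopy equivalent to a finite CW complex of dimension at most $q$ whose number of $k$-cells is $b_k(\M(\A))$ for $k\le q$; the base case $q=0$ holds because $\M^0$ is a finite set of $b_0(\M(\A))$ points. Given this, $H_q(\M^{q-1};\Bbb Z)=0$ for dimensional reasons. The long exact sequence of the pair $(\M^q,\M^{q-1})$ then reads
$$0\longrightarrow H_q(\M^q;\Bbb Z)\longrightarrow H_q(\M^q,\M^{q-1};\Bbb Z)\stackrel{\partial}{\longrightarrow} H_{q-1}(\M^{q-1};\Bbb Z)\longrightarrow H_{q-1}(\M^q;\Bbb Z).$$
Proposition \ref{prop:trunc} (2) applied to both $\F^{q-1}$ and $\F^q$ shows that the last arrow is an isomorphism (both groups are identified with $H_{q-1}(\M(\A);\Bbb Z)$), so $\partial=0$ and consequently $H_q(\M^q;\Bbb Z)\stackrel{\cong}{\to} H_q(\M^q,\M^{q-1};\Bbb Z)$. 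Combined with the isomorphism $H_q(\M^q;\Bbb Z)\cong H_q(\M(\A);\Bbb Z)$ from Proposition \ref{prop:trunc} (2) this yields (2), and it completes the inductive step because it shows $\M^q$ has the homotopy type of the required CW complex.

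The main technical obstacle is the vanishing of the boundary map $\partial$: nothing in the construction of $\sigma_C$ prevents a priori a non-trivial attaching map on homology, and we deduce it is zero only by comparing Betti numbers on both sides via Propositions \ref{prop:trunc} (2) and \ref{prop:betti}. Everything else is a standard cellular manipulation, but getting the dimension bookkeeping in the induction hypothesis right — in particular ensuring that the equivalence $\M^q\simeq\M^{q-1}\cup(\text{$q$-cells})$ upgrades to the statement that $\M^q$ itself has a CW model of dimension $\le q$ — needs to be executed carefully.
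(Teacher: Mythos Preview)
The paper does not supply its own proof of this proposition: it is stated with the attribution \cite{Y} and no argument is given in the text. Your proposal is therefore not being compared against an in-paper proof but against a cited result.

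That said, your argument is correct and is essentially the standard way one derives such statements from a Lefschetz-type cell attachment. Two small remarks. First, in your base case you write that $\M^0$ is ``a finite set of $b_0(\M(\A))$ points''; since $\F^0$ is a single real point, $\M^0$ is exactly one point and $b_0(\M(\A))=1$, so this is fine but the phrasing is slightly misleading. Second, in your closing paragraph you attribute the vanishing of $\partial$ to ``comparing Betti numbers via Propositions \ref{prop:trunc}(2) and \ref{prop:betti}'', but in fact your own argument already obtains $\partial=0$ purely from Proposition \ref{prop:trunc}(2) (the injectivity of $H_{q-1}(\M^{q-1})\to H_{q-1}(\M^q)$), with Proposition \ref{prop:betti} only needed to identify the rank in part (1). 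Neither point affects the validity of the proof.
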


We construct the basis of $H_{2\ell-q}^{BM}(\M(\A), \Bbb{Z})$. 
Let $C\in\ch_\F^q(\A)$. 
Lemma \ref{lem:real} indicates that 
%Since $\dim X_C=\ell -q$, we have 
\begin{equation}
\label{eq:cyc}
\cl_{\M(\A)}(S(C))=
(\widetilde{C}\times\sqrt{-1}\cdot\tau(X_C))\cap\M(\A), 
\end{equation}
which is a closed oriented $(2\ell-q)$-dimensional 
submanifold of $\M(\A)$ because $\dim X_C=\ell-q$. 
The closed submanifold $\cl_{\M(\A)}(S(C))$ determines a 
cycle 
$[\cl_{\M(\A)}(S(C))]\in H_{2\ell-q}^{BM}({\M(\A)}, \Bbb{Z})$. 

\begin{theorem}
\label{thm:basis}
The classes 
$\{[\cl_{\M(\A)}(S(C))]\}_{C\in\ch_\F^q(\A)}$ form 
a basis of the $(2\ell-q)$-th Borel-Moore homology group 
$H_{2\ell-q}^{BM}(\M(\A), \Bbb{Z})$. 
\end{theorem}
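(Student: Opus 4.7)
The approach is to use the nondegenerate Poincar\'e--Lefschetz intersection pairing
$$
H^{BM}_{2\ell-q}(\M(\A), \Bbb{Z}) \otimes H_q(\M(\A), \Bbb{Z}) \longrightarrow \Bbb{Z}
$$
on the oriented $2\ell$-manifold $\M(\A)$. By Proposition \ref{prop:trubasis}, $H_q(\M(\A), \Bbb{Z})$ is free with basis $\{[\sigma_{C'}] : C' \in \ch_\F^q(\A)\}$, so $H^{BM}_{2\ell-q}(\M(\A), \Bbb{Z})$ is also free of rank $|\ch_\F^q(\A)|$. It therefore suffices to show that the intersection matrix
$$
M_{C,C'} := [\cl_{\M(\A)}(S(C))] \cdot [\sigma_{C'}], \qquad C, C' \in \ch_\F^q(\A),
$$
is unimodular over $\Bbb{Z}$.

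I would first choose a representative of $\sigma_{C'}$ whose real locus is exactly $\{p_{C'}\}$ and whose tangent space there is $\sqrt{-1}\,\tau(\F^q)$. This is legitimate because $\sigma_{C'}$ is only determined up to homotopy in $(\M^q, \M^{q-1})$: one can take a small disk in $p_{C'} + \sqrt{-1}\,\tau(\F^q)$ and extend it inside $\M^q$ so that its boundary lies in $\M^{q-1}$. By equation (\ref{eq:cyc}) combined with the genericity equality $\tau(\F^q) \cap \tau(X_C) = \{0\}$, the intersection $\cl_{\M(\A)}(S(C)) \cap \F^q_\Bbb{C}$ is contained in the real locus and equals $\widetilde{C} \cap \F^q$. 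Consequently $\sigma_{C'}(D^q) \cap \cl_{\M(\A)}(S(C)) = \{p_{C'}\}$ exactly when $p_{C'} \in \widetilde{C}$, which by Definition \ref{def:order} is the condition $C \preceq C'$; otherwise the geometric intersection is empty.

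At such an intersection, transversality in $\Bbb{C}^\ell$ follows from the direct sum decomposition $(\tau(\F^q) \oplus \tau(X_C) \oplus \sqrt{-1}\,\tau(X_C)) + \sqrt{-1}\,\tau(\F^q) = \Bbb{C}^\ell$. Comparing the concatenated ordered basis
$$
(\partial_{h_1}, \dots, \partial_{h_\ell}, \sqrt{-1}\partial_{h_{q+1}}, \dots, \sqrt{-1}\partial_{h_\ell}, \sqrt{-1}\partial_{h_1}, \dots, \sqrt{-1}\partial_{h_q})
$$
with the ambient basis fixed in \S\ref{subsec:ori} yields the local intersection number $(-1)^{q(\ell-q)}$ uniformly in $C$. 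Ordering $\ch_\F^q(\A)$ by any linear extension of $\preceq$ then makes $M$ triangular with this unit on the diagonal, so $\det M = \pm 1$ and the classes $\{[\cl_{\M(\A)}(S(C))]\}$ form a $\Bbb{Z}$-basis.

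The main obstacle I anticipate is the local-reduction step: producing a representative of $\sigma_{C'}$ whose only real intersection with $\cl_{\M(\A)}(S(C))$ lies at $p_{C'}$ in the open disk (not on $\partial D^q \subset \M^{q-1}$) and verifying that this can be achieved within the homotopy class guaranteed by \cite{Y}. Once that representative is in hand, the remaining transversality and sign computations reduce to straightforward linear algebra in the ordered bases of \S\ref{subsec:ori}.
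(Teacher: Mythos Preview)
Your proposal is correct and follows essentially the same route as the paper: compute the intersection matrix of $[\cl_{\M(\A)}(S(C))]$ against the basis $[\sigma_{C'}]$ of Proposition~\ref{prop:trubasis}, show it is triangular with diagonal entries $(-1)^{q(\ell-q)}$ with respect to a linear extension of $\preceq$, and conclude unimodularity. Your anticipated ``obstacle'' of constructing a representative with real locus exactly $\{p_{C'}\}$ is in fact unnecessary: since $\cl_{\M(\A)}(S(C))\cap\F^q_\Bbb{C}=\widetilde{C}\cap\F^q$ and, by Lemma~\ref{lem:key2}(1), every chamber $C''\subset\widetilde{C}$ meeting $\F^q$ lies in $\ch_\F^q(\A)$, the Transversality and Non-intersecting properties of $\sigma_{C'}$ already force the intersection to be contained in $\{p_{C'}\}$ without altering the representative.
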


\begin{proof}
We compute the intersection number of 
$[\cl_{\M(\A)}(S(C))]\in H_{2\ell -q}^{BM}({\M(\A)})$ and 
$[\sigma(C')]\in H_{q}({\M(\A)})$, and show that 
the intersection matrix 
$$
I([\cl_{\M(\A)}(S(C))], [\sigma(C')])_{C, C'\in\ch_\F^q(\A)}
$$ 
is a triangular matrix with each diagonal entry 
$(-1)^{q(\ell-q)}$. 

We fix an ordering on $\{C_1, \dots, C_b\}=\ch_\F^q(\A)$ 
in such a way that $C_i\preceq{C_j}\Longrightarrow i<j$ 
(e.g. choose an ordering with 
$h_q(X_{C_1}\cap\F^q)\leq h_q(X_{C_2}\cap\F^q)\leq
\cdots\leq h_q(X_{C_b}\cap\F^q)$.) Since 
$\F^q$ and $X_C$ are mutually complementary in $\Bbb{R}^\ell$, 
the tangent 
space $T_{p_C}\Bbb{C}^\ell$ can expressed as 
$$
T_{p_C}\Bbb{C}^\ell=
T_{p_C}\Bbb{R}^\ell\oplus
\sqrt{-1}\cdot
T_{p_C}\F^q
\oplus\sqrt{-1}\cdot
\tau(X_C). 
$$
The above mentioned properties and 
(\ref{eq:cyc}) implies that $\cl_{\M(\A)}(S(C))$ 
intersects transversally to $\sigma_{C'}$ if 
and only if $p_{C'}\in\widetilde{C}$. In fact, 
we have $T_{p_C}\cl_{\M(\A)}(S(C))=\Bbb{R}^\ell
\oplus\sqrt{-1}\cdot\tau(X_C)$ 
and $T_{p_{C'}}\sigma_{C'}(D^q)=
\sqrt{-1}\cdot T_{p_{C'}}\F^q$, which implies the 
transversality and its 
%Thus 
%$\cl_{\M(\A)}(S(C))$ and $\sigma_{C'}(D^q)$ intersects 
%if and only if $p_{C'}\in\widetilde{C}$, and in this 
%case transversal with 
intersection number is 
$(-1)^{q(\ell-q)}$. 
\end{proof}

\section{Relations with OS-type generators}
\label{sec:os}

As is mentioned in \S \ref{sec:intro}, there is a 
canonical isomorphism 
$\varphi: H^q(\M(\A), \Bbb{Z})\stackrel{\cong}{\longrightarrow}
H^{BM}_{2\ell-q}(\M(\A), \Bbb{Z})$ between cohomology and 
Borel-Moore 
homology of $\M(\A)$. 
In this section, we describe $\varphi$ explicitly 
by using the basis introduced in the previous sections. 

First note that both 
$H^{BM}_{2\ell-q}(\M(\A), \Bbb{Z})$ and 
$H^q(\M(\A), \Bbb{Z})$ are dual to the 
homology group $H_q(\M(\A), \Bbb{Z})$. 
The pairing 
$H^{BM}_{2\ell-q}(\M(\A), \Bbb{Z})\times
H_q(\M(\A), \Bbb{Z})\rightarrow\Bbb{Z}$ is defined 
by the intersection $I(\cdot, \cdot)$, and 
$H^q(\M(\A), \Bbb{Z})\times
H_q(\M(\A), \Bbb{Z})\rightarrow\Bbb{Z}$ is 
defined by the cap product $\cap$ (or the integration 
if we consider de Rham cohomology). 

The structure of cohomology ring 
$H^q(\M(\A), \Bbb{Z})$ is well studied 
(see e.g. \cite{orl-sol}) and especially, by Arnold-Brieskorn's 
result, it is generated by logarithmic forms 
$$
\omega_i=\frac{1}{2\pi\sqrt{-1}}\frac{d\alpha_i}{\alpha_i}, 
$$
for $i=1, \dots, n$. The $q$-th cohomology group 
$H^q(\M(\A), \Bbb{Z})$ is spanned by 
$\omega_{i_1, \dots, i_q}=
\omega_{i_1}\wedge
\omega_{i_2}\wedge\dots\wedge
\omega_{i_q}$ 
with $H_{i_1}, \dots, H_{i_q}$ linearly independent.

\begin{theorem}
Let $I=\{i_1, \dots, i_q\}\subseteq \{1, \dots, n\}$ be 
an ordered index (see \S\ref{subsec:ori} for 
notation). Then 
$$
\varphi(\omega_I)=
(-1)^{q(\ell-q)}\varepsilon(I)\cdot
\sum_C [\cl_\M(S(C))], 
$$
where $C$ runs over all chambers $C\in\ch_\F^q(\A)$ satisfying 
$C\subset C_0(I)$ and $\tau(X_C)=\tau(X(I))$. 
\end{theorem}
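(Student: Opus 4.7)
The plan is to verify the claimed identity in $H^{BM}_{2\ell-q}(\M(\A),\Bbb{Z})$ by pairing both sides with the basis $\{[\sigma_{C'}]\}_{C'\in\ch^q_\F(\A)}$ of $H_q(\M(\A),\Bbb{Z})$ provided by Proposition \ref{prop:trubasis}. Because $\varphi$ is Poincar\'e duality, the intersection pairing of $\varphi(\omega_I)$ with $[\sigma_{C'}]$ coincides with the de Rham pairing $\int_{\sigma_{C'}}\omega_I$, while the intersection pairing of the proposed right-hand side with $[\sigma_{C'}]$ can be evaluated using the numbers already computed in the proof of Theorem \ref{thm:basis}. Thus the whole statement reduces to an equality of integers obtained by pairing with each $[\sigma_{C'}]$.

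\paragraph{Right-hand side.} The proof of Theorem \ref{thm:basis} gives $I([\cl_\M(S(C))],[\sigma_{C'}])=(-1)^{q(\ell-q)}$ when $p_{C'}\in\widetilde{C}$ (that is, $C\preceq C'$) and $0$ otherwise. Cancelling the prefactor $(-1)^{q(\ell-q)}$ in front of the sum, the pairing of the right-hand side with $[\sigma_{C'}]$ reduces to $\varepsilon(I)\cdot N(C')$, where
\[
N(C')=\#\bigl\{C\in\ch^q_\F(\A):\,C\subset C_0(I),\ \tau(X_C)=\tau(X(I)),\ C\preceq C'\bigr\}.
\]
I would then show $N(C')=\mathbf{1}_{C'\subset C_0(I)}$. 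The condition $\tau(X_C)=\tau(X(I))$ forces $\A_{[X_C]}=\A_{[X(I)]}$, so $\widetilde{C}$ must be the unique chamber $D$ of $\A_{[X(I)]}$ containing $C'$; since $\A(I)\subset\A_{[X(I)]}$, the containment $D\subset C_0(I)$ is equivalent to $C'\subset C_0(I)$. Assuming $C'\subset C_0(I)$, the $h_q$-minimal vertex of $\overline{D}\cap\F^q$ is a single point (by Assumption \ref{assum:h}(2)) lying in some $X_C\cap\F^q$ with $X_C\in L(\A_{[X(I)]})$ of codimension $q$, and Proposition \ref{prop:trunc}(1) identifies the unique $C\in\ch^q_\F(\A)$ with $\widetilde{C}=D$. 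In the opposite case $C'\not\subset C_0(I)$ no such $C$ can be counted because $C\subset\widetilde{C}\ni C'$ forces $C'\subset C_0(I)$ via $C\subset C_0(I)$.

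\paragraph{Left-hand side: the main obstacle.} The central step is the de Rham calculation
\[
\int_{\sigma_{C'}}\omega_I=\varepsilon(I)\cdot\mathbf{1}_{C'\subset C_0(I)}.
\]
I would obtain this using the iterated-disk description of $\sigma_{C'}$ implicit in \cite{Y}: up to homotopy, $\sigma_{C'}$ factors as the product, for $k=1,\dots,q$, of a small loop in the complex line $\F^k_\Bbb{C}/\F^{k-1}_\Bbb{C}$ winding around the hyperplane of $\A$ that separates $C'\cap\F^k$ from $\F^{k-1}$. Writing $\omega_I=\omega_{i_1}\wedge\cdots\wedge\omega_{i_q}$ and applying an iterated Fubini/residue argument, each loop contributes a factor $\pm 1$ or $0$: non-vanishing occurs precisely when, for each $k$, the hyperplane $H_{i_k}$ is the one separating $C'\cap\F^k$ from $\F^{k-1}$, which is equivalent to $C'\subset C_0(I)$. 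In the non-vanishing case the accumulated sign equals the determinant of the change-of-basis from $(\partial_{h_1},\dots,\partial_{h_q})$ to $(w_{i_1},\dots,w_{i_q})$ on $\F^q$, which is exactly $\varepsilon(I)$ by the orientation conventions of \S\ref{subsec:ori}. The hardest aspect is organizing this sign bookkeeping cleanly across the iteration; in practice one can either do it by a careful induction on $q$ using the orientations fixed in \S\ref{subsec:ori}, or appeal to the pairing matrix $\bigl(\int_{\sigma_{C'}}\omega_I\bigr)$ already analyzed in \cite{Y}. Combining the two pairings then proves the theorem.
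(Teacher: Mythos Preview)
Your overall strategy---pair both sides with the basis $\{[\sigma_{C'}]\}$ of $H_q(\M(\A),\Bbb Z)$---is exactly what the paper does, and your treatment of the right-hand side is a correct combinatorial unpacking of what the paper states more geometrically. The paper introduces the auxiliary cycle $S(I)=C_0(I)\oplus\sqrt{-1}\,\tau(X(I))$ and observes that $\cl_{\M}(S(I))$ breaks up as the disjoint union of the $\cl_{\M}(S(C))$ in the sum; your computation $N(C')=\mathbf 1_{C'\subset C_0(I)}$ is precisely the statement that the chambers $D$ of $\A_{[X(I)]}$ contained in $C_0(I)$ are in bijection with the chambers $C$ appearing in the sum, which is the content of that decomposition. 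So on this half the two arguments coincide.

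The de~Rham side is where your proposal runs into trouble. The description of $\sigma_{C'}$ as a product, over $k=1,\dots,q$, of a small loop around ``the hyperplane of $\A$ that separates $C'\cap\F^k$ from $\F^{k-1}$'' is not an accurate model: in general several hyperplanes of $\A$ separate $C'\cap\F^k$ from $\F^{k-1}$, and $\sigma_{C'}$ is only specified up to homotopy by the transversality and non-intersecting conditions, not as an iterated product of circles in $\M(\A)$. Consequently your stated non-vanishing criterion (``for each $k$, $H_{i_k}$ is \emph{the} separating hyperplane at level $k$'') is not equivalent to $C'\subset C_0(I)$; it depends on the ordering of $I$ and is strictly stronger. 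An iterated residue computation along these lines would require a much more explicit cellular model than is available here.

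The paper sidesteps this entirely. Since $\omega_I$ is pulled back along the inclusion $j:\M(\A)\hookrightarrow\M(\A(I))$, one has $\int_{\sigma_{C'}}\omega_I=\int_{j_*[\sigma_{C'}]}\omega_I$, and the target $\M(\A(I))$ is homotopy equivalent to $(\Bbb C^*)^q$, so $H_q(\M(\A(I)),\Bbb Z)\cong\Bbb Z$. Applying Proposition~\ref{prop:trubasis} to the subarrangement $\A(I)$ (for which $\ch^q_\F(\A(I))=\{C_0(I)\}$), one sees that $j_*[\sigma_{C'}]$ is the generator when $C'\subset C_0(I)$ (because $\sigma_{C'}$ is then transversal to $C_0(I)$ at a single point) and is $0$ otherwise; the sign $\varepsilon(I)$ drops out from comparing the orientation $(\sqrt{-1}\partial_{h_1},\dots,\sqrt{-1}\partial_{h_q})$ on $\sigma_{C'}$ with the frame $(w_{i_1},\dots,w_{i_q})$ dual to $(d\alpha_{i_1},\dots,d\alpha_{i_q})$. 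This two-line reduction replaces your iterated-residue bookkeeping and is what you should substitute for the ``Left-hand side'' paragraph.
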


\begin{proof}
Let us define $S(I)\subset\Bbb{C}^\ell$ to be 
$$
S(I)=C_0(I)\oplus\sqrt{-1}\cdot\tau(X(I)). 
$$
Then $\cl_{\M(\A)}(S(I))$ is a disjoint union of 
$\cl_\M(S(C))$'s with $C$ running over all chambers 
$C\in\ch_\F^q(\A)$ satisfying 
$C\subset C_0(I)$ and $\tau(X_C)=\tau(X(I))$. 
It is enough to show 
$\varphi(\omega_I)=
(-1)^{q(\ell-q)}\varepsilon(I)\cdot\cl_\M(S(I))$. 
To do this, we shall see the pairing with 
the homology class $[\sigma_{C'}]\in 
H_q(\M^q, \M^{q-1}, \Bbb{Z})\cong H_q({\M(\A)}, \Bbb{Z})$. 

First we compute $\int_{[\sigma_{C'}]}\omega_I$. The 
complement $\Bbb{C}^\ell\setminus\bigcup_{i\in I}
H_{i, \Bbb{C}}$ is homotopy equivalent to 
$(\Bbb{C}^*)^q\simeq (S^1)^q$. The top homology 
$H_q(\Bbb{C}^\ell\setminus\bigcup_{i\in I}
H_{i, \Bbb{C}}, \Bbb{Z})\cong \Bbb{Z}$ is rank one. 
If $C'\subset C_0(I)$, then $[\sigma_{C'}]$ is transversal to 
$C_0(I)$. By applying Proposition \ref{prop:trubasis} to the arrangement 
$\A(I)=\{H_{i_1}, \dots, H_{i_q}\}$, we know the fact that 
$[\sigma_{C'}]$ is a generator of 
$H_q(\Bbb{C}^\ell\setminus\bigcup_{i\in I}
H_{i, \Bbb{C}}, \Bbb{Z})$. Similarly, 
if $C'\not\subset C_0(I)$, then $[\sigma_{C'}]=0$. 
We have 
$$
\int_{[\sigma_{C'}]}\omega_I=
\left\{
\begin{array}{ll}
\varepsilon(I)& \mbox{ if }C'\subset C_0(I),\\
0&\mbox{else}. 
\end{array}
\right.
$$

By a computation similar to the proof of 
Theorem \ref{thm:basis}, we have 
$$
I([S(I)], [\sigma_{C'}])=
\left\{
\begin{array}{ll}
(-1)^{q(\ell-q)}& \mbox{ if }C'\subset C_0(I),\\
0&\mbox{else}. 
\end{array}
\right.
$$
This completes the proof. 
\end{proof}

\begin{remark}
The correspondences between chambers and 
de Rham cohomology groups were investigated by 
Varchenko and Gel'fand in \cite{var-gel}. 
Indeed, the cycle $S(I)$ appeared in their paper. 
\end{remark}

\noindent
Ko-Ki Ito,

RIMS,
Kyoto University,
Sakyo-ku, Kyoto 606-8502, Japan

{\em e-mail address}\ : \  koki@kurims.kyoto-u.ac.jp

\medskip
%\ \\

\noindent
Masahiko Yoshinaga

Department of Mathematics,
Faculty of Science,
Kyoto University,
Sakyo-ku, Kyoto 606-8502, Japan

{\em e-mail address}\ : \ mhyo@math.kyoto-u.ac.jp

\end{document}